\definecolor{verylight}{gray}{0.97}
\definecolor{light}{gray}{0.9}
\definecolor{medium}{gray}{0.85}
\definecolor{dark}{gray}{0.6}
\def\NZQ{\Bbb}               
\def\NN{{\NZQ N}}
\def\KK{{\NZQ K}}
\def\frk{\frak}               
\def\pp{{\frk p}}
\def\mm{{\frk m}}
\def\Phi{{\frk n}}
\def\Phi{{\frk N}}
\def\MR{{\mathcal R}}
\def\MF{{\mathcal F}}
\def\MC{{\mathcal C}}
\def\ab{{\bold a}}
\def\xb{{\bold x}}
\def\opn#1#2{\def#1{\operatorname{#2}}} 
\opn\chara{char} \opn\length{\ell} \opn\pd{pd} \opn\rk{rk}
\opn\projdim{proj\,dim} \opn\injdim{inj\,dim} \opn\rank{rank}
\opn\depth{depth} \opn\grade{grade} \opn\height{height}
\opn\embdim{emb\,dim} \opn\codim{codim}
\opn\Tr{Tr} \opn\bigrank{big\,rank}
\opn\superheight{superheight}\opn\lcm{lcm}
\opn\trdeg{tr\,deg}
\opn\reg{reg} \opn\lreg{lreg} \opn\ini{in} \opn\lpd{lpd}
\opn\size{size}\opn\bigsize{bigsize}
\opn\cosize{cosize}\opn\bigcosize{bigcosize}
\opn\sdepth{sdepth}\opn\sreg{sreg}
\opn\link{link}\opn\fdepth{fdepth}
\opn\div{div} \opn\Div{Div} \opn\cl{cl} \opn\Cl{Cl}
\let\epsilon\varepsilon
\let\phi=\varphi
\let\kappa=\varkappa
\opn\Spec{Spec} \opn\Supp{Supp} \opn\supp{supp} \opn\Sing{Sing}
\opn\Ass{Ass} \opn\Min{Min}\opn\Mon{Mon} \opn\dstab{dstab} \opn\astab{astab}
\opn\Syz{Syz}
\opn\Ann{Ann} \opn\Rad{Rad} \opn\Soc{Soc}
\opn\Im{Im} \opn\Ker{Ker} \opn\Coker{Coker} \opn\Am{Am}
\opn\Hom{Hom} \opn\Tor{Tor} \opn\Ext{Ext} \opn\End{End}
\opn\Aut{Aut} \opn\id{id}
\opn\nat{nat}
\opn\pff{pf}
\opn\Pf{Pf} \opn\GL{GL} \opn\SL{SL} \opn\mod{mod} \opn\ord{ord}
\opn\Gin{Gin} \opn\Hilb{Hilb}\opn\sort{sort}
\opn\initial{init}
\opn\ende{end}
\opn\height{height}
\opn\type{type}
\opn\set{set}
\opn\aff{aff} \opn\con{conv} \opn\relint{relint} \opn\st{st}
\opn\lk{lk} \opn\cn{cn} \opn\core{core} \opn\vol{vol}
\opn\link{link} \opn\star{star}\opn\lex{lex}
\opn\gr{gr}
\def\pot#1#2{#1[\kern-0.28ex[#2]\kern-0.28ex]}
\opn\dirlim{\underrightarrow{\lim}}
\opn\inivlim{\underleftarrow{\lim}}
\def\Implies{\ifmmode\Longrightarrow \else
        \unskip${}\Longrightarrow{}$\ignorespaces\fi}
\def\implies{\ifmmode\Rightarrow \else
        \unskip${}\Rightarrow{}$\ignorespaces\fi}
\def\iff{\ifmmode\Longleftrightarrow \else
        \unskip${}\Longleftrightarrow{}$\ignorespaces\fi}
 \theoremstyle{plain}
\newtheorem{Theorem}{Theorem}[section]
 \newtheorem{Lemma}[Theorem]{Lemma}
 \newtheorem{Corollary}[Theorem]{Corollary}
 \newtheorem{Proposition}[Theorem]{Proposition}
 \theoremstyle{definition}
 \newtheorem{Definition}[Theorem]{Definition}
 \newtheorem{Remark}[Theorem]{Remark}
 \newtheorem{Example}[Theorem]{Example}
\let\epsilon\varepsilon
\let\kappa=\varkappa
\opn\dis{dis}
\def\pnt{{\raise0.5mm\hbox{\large\bf.}}}
\opn\Lex{Lex}
\begin{document}
\title{Almost normally torsionfree ideals}
\author {Claudia Andrei-Ciobanu}

\address{Claudia Andrei-Ciobanu, Faculty of Mathematics and Computer Science, University of Bucharest, Str. Academiei 14,
 010014 Bucharest, Romania} \email{claudiaandrei1992@gmail.com}

\thanks{The author would like to thank Professor Viviana Ene for valuable suggestions and comments during the preparation of this paper.}

\begin{abstract}
We describe all connected graphs whose edge ideals are almost normally torsionfree. We also prove that the facet ideal of a special odd cycle is almost normally torsionfree. Finally, we determine the $t$-spread principal Borel ideals generated in degree $3$ which are almost normally torsionfree.
\end{abstract}

\subjclass[2010]{13A15, 13A30, 05C25, 05E45}
\keywords{associated prime ideals, connected graphs, simplicial complexes, $t$-spread principal Borel ideals}

\maketitle
\section*{Introduction}

Combinatorial algebra is closely related to monomial ideals. A special class of monomial ideals are squarefree monomial ideals. They can be associated with simplicial complexes or graphs. In this paper, we study squarefree monomial ideals which are very near to normally torsionfree ideals.

Let $\KK$ be a field and $S=\KK[x_1,\ldots, x_n]$ be the polynomial ring in $n$ variables over $\KK$. If $I$ is a monomial ideal in $S$, then it is known that the associated prime ideals of $I$ are generated by variables; see \cite[Section~1.3]{HHBook}.
In \cite[Lemma~2.3]{HRV} and \cite[Lemma~2.11]{FHV}, it was given a method for checking whether a prime ideal $P$ is an associated prime ideal of $I$. More precisely, $P\in \Ass(I)$ if and only if $\depth(S_P/I_P)=0$, where $S_P$ is the polynomial ring $\KK[\{x_i: x_i\in P\}]$ and $I_P$ is the localization of $I$ with respect to $P$.

An important result of Brodmann from \cite{BR} shows in particular, that the set of associated prime ideals of a monomial ideal $I\subset S$ stabilizes which means that there exists an integer $k_0$ such that $\Ass(I^k)=\Ass(I^{k_0})$ for all $k\geq k_0$. A monomial ideal $I$ of the polynomial ring $S$ satisfies the persistence property if \[\Ass(I)\subset\Ass(I^2)\subset\ldots\subset\Ass(I^k)\subset\Ass(I^{k+1})\subset\ldots.\] Classes of monomial ideals which have the persistence property are studied in \cite{AEL}, \cite{BMV}, \cite{FHV} and \cite{HRV}. For example, in \cite[Corollary 2.6]{AEL}, it was shown that a $t$-spread principal Borel ideal has the persistence property, while in \cite[Lemma 2.12]{BMV}, it was proved that the edge ideal of a simple graph has the persistence property.

In \cite[Section 1.4]{HHBook}, normally torsionfree ideals are studied. A monomial ideal in $S$ is called normally torsionfree if $\Ass(I^k)=\Min(I)$ for all $k\geq 1$. There it was shown that the normally torsionfree squarefree monomial ideals have the property that their symbolic powers coincide with the ordinary powers. Moreover, in \cite[Theorem~10.3.16]{HHBook}, it was given a combinatorial characterization of normally torsionfree facet ideals of simplicial complexes.

In this paper, we generalize normally torsionfree ideals and we introduce almost normally torsionfree ideals. A monomial ideal $I\subset S$ is called almost normally torsionfree if there exist a positive integer $k$ and a monomial prime ideal $P$ such that $\Ass(I^m)=\Min(I)$ for $m\leq k$ and $\Ass(I^m)\subseteq\Min(I)\cup\{P\}$ for $m\geq k+1.$ This definition is slightly more general than that considered in \cite{HLR}, where the monomial prime ideal $P$ is only given by the graded maximal ideal $\mm=(x_1,\ldots, x_n)\subset S$. Notice that any normally torsionfree ideal is almost normally torsionfree.

In the first section, we refer to edge ideals. According to \cite[Theorem~5.9]{SVV}, it is known that the edge ideal of a simple graph $G$ is normally torsionfree if an only if $G$ is a bipartite graph. The aim of the main result of this section is to classify all connected graphs whose edge ideals are almost normally torsionfree; see Theorem~\ref{graph}.

The second section is dedicated to facet ideals of special odd cycles. We have seen before that \cite[Theorem 10.3.16]{HHBook} gives a large class of normally torsionfree facet ideals by using special odd cycles. In Theorem~\ref{specialcycle}, we prove that the facet ideal of a special odd cycle is almost normally torsionfree.

In the last section, we study $t$-spread principal Borel ideals generated in degree $3$. They have been recently introduced in \cite{EHQ}. The main result of this section is Theorem~\ref{tspread}, where we characterize the $t$-spread principal Borel ideals generated in degree $3$ which are almost normally torsionfree. Moreover, we prove that in this case, the properties normally torsionfree and almost normally torsionfree coincide. We notice that for degree $2$, the behaviour of the set of associated prime ideals of the powers of a $t$-spread principal Borel ideal was given in \cite[Theorem 1.1]{LR}. We recover here this result by a different proof; see Proposition~\ref{deg2}.

\section{Almost normally torsionfree edge ideals}

In this section, we present the behaviour of the set of associated prime ideals of the powers of an edge ideal.

Let $G$ be a finite simple graph on the vertex set $V(G)=[n]$. Recall that a \textit{walk of length $r$} in $G$ is a sequence of edges \[\gamma=(\{i_0,i_1\},\{i_1, i_2\},\ldots, \{i_{r-1}, i_r\})\] with each $i_k\in [n]$ for $k\in\{0,1,\ldots, r\}$. An \textit{even walk} (respectively \textit{odd walk}) is a walk of even (respectively odd) length. A \textit{closed walk} is a walk with $i_0=i_r$ and a \textit{cycle} is a closed walk of the form \[C=(\{i_0,i_1\},\{i_1, i_2\},\ldots, \{i_{r-1}, i_0\})\] with $i_j\neq i_k$ for all $0\leq j<k\leq r-1$.

The graph $G$ is a \emph{bipartite graph} if it does not contain any odd cycle.

Let $S=\KK[x_1,\ldots,x_n]$ be the polynomial ring over the field $K$. The \textit{edge ideal} of $G$ is the monomial ideal
\[I(G)=(x_ix_j: \{i,j\}\text{ is an edge in }G)\subset S.\]

By \cite[Lemma~9.1.4]{HHBook}, every minimal prime ideal of $I(G)$ is given by a minimal vertex cover of $G$. A vertex cover of $G$ is a subset $A\subset[n]$ such that $\{i,j\}\cap A\neq \emptyset$ for all edges $\{i,j\}$ of $G$. A vertex cover $A$ is called minimal if no proper subset of $A$ is a vertex cover of $G$.
In \cite[Theorem~3.3]{CMS}, it was given a method to construct associated prime ideals of the powers of $I(G)$ by considering the odd cycles of $G$.

Let $I\subset S$ be a monomial ideal and $P\subset S$ be a monomial prime ideal containing $I$. Then $P$ is a \textit{monomial persistent prime ideal} of $I$, if whenever $P\in \Ass(I^k)$ for some $k$, then $P\in\Ass(I^{k+1})$. The set of monomial persistent prime ideals is denoted by $\Ass^{\infty}(I)$.

According to \cite[Lemma~2.12]{BMV}, $I(G)$ has the \textit{persistence property} which means that the sets of associated primes of powers of $I(G)$ form an ascending chain. In other words, all monomial prime ideals $P\in\bigcup_{k>0}\Ass(I(G)^k)$ are monomial persistent prime ideals.

In \cite[Theorem~5.9]{SVV}, it was shown that $I(G)$ is normally torsionfree if and only if $G$ is a bipartite graph. Recall that a monomial ideal $I\subset S$ is \textit{normally torsionfree} if $\Min(S/I)=\Ass(S/I^k)$ for all $k\geq 1$.

\begin{Definition}
A monomial ideal $I\subset S$ is called \emph{almost normally torsionfree} if there exist a positive integer $k$ and a monomial prime ideal $P$ such that
$\Ass(I^m)=\Min(I)$ for $m\leq k$ and $\Ass(I^m)\subseteq\Min(I)\cup\{P\}$ for $m\geq k+1.$
\end{Definition}

In particular, any normally torsionfree ideal is almost normally torsionfree.

In this section we study edge ideals which are almost normally torsionfree. The following lemma will allow us to restrict our attention to connected graphs. It generalizes \cite[Lemma 3.4]{HM}. Notice that its proof is the same with that given in \cite{HM}, but for the convenience of the reader, we will include it here.

Let $I$ be a monomial ideal of $S$. From now on, $G(I)$ denotes the minimal set of monomial generators of $I$.

\begin{Lemma}\label{split}
Let $I_1\subset S_1=\KK[x_1,\ldots,x_n], I_2\subset S_2=\KK[y_1,\ldots,y_m]$ be two monomial ideals in disjoint sets of variables.
Let \[I=I_1S+I_2S\subset S=\KK[x_1,\ldots,x_n,y_1,\ldots,y_m].\] Then $P\in \Ass_S(I^k)$ if and only if $P=P_1S+P_2S$ where
$P_1\in \Ass_{S_1}(I_1^{k_1})$ and $P_2\in \Ass_{S_2}(I_2^{k_2})$ for some positive integers $k_1,k_2$ with $k_1+k_2=k+1.$
\end{Lemma}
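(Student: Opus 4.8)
The plan is to use the criterion quoted in the introduction: for a monomial ideal $J$ and a monomial prime $Q\supseteq J$, one has $Q\in\Ass_S(J)$ if and only if $\depth\bigl((S/J)_Q\bigr)=0$, equivalently $Q\in\Ass_S(J)$ iff after localizing at $Q$ (which here just means passing to the polynomial subring on the variables in $Q$ and discarding generators not supported on those variables) the zero ideal has an associated prime equal to $Q_Q$. Since the two ideals $I_1,I_2$ live in disjoint variable sets, the first observation I would make is that any monomial prime $P\in\Ass_S(I^k)$ must be of the form $P=P_1S+P_2S$ with $P_1\subseteq(x_1,\dots,x_n)$ and $P_2\subseteq(y_1,\dots,y_m)$, simply because associated primes of monomial ideals are generated by variables. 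So the content of the lemma is to track how the multiplicity $k$ splits between the two factors.

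First I would compute the relevant powers explicitly. Because $I=I_1S+I_2S$ with disjoint variables, the binomial-type expansion gives
\[
I^k=\sum_{a+b=k} I_1^a I_2^b\, S,
\]
and more usefully $S/I^k$ admits a description via the tensor-product structure $S\cong S_1\tensor_{\KK} S_2$. The key algebraic input I would invoke is the behaviour of associated primes under tensor products over a field: for finitely generated graded modules $M$ over $S_1$ and $N$ over $S_2$, the associated primes of $M\tensor_\KK N$ are exactly the primes $P_1S+P_2S$ with $P_1\in\Ass_{S_1}(M)$ and $P_2\in\Ass_{S_2}(N)$. Equivalently, $\depth_S(M\tensor_\KK N)=\depth_{S_1}(M)+\depth_{S_2}(N)$, so $P$ is associated to the tensor product iff each $P_i$ is associated to the respective factor.

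The technical heart is therefore to connect $\Ass_S(I^k)$ to the associated primes of the individual power-ideals through a depth computation. I would argue as follows. Fix $P=P_1S+P_2S$. Localizing at $P$ replaces $S$ by the polynomial ring on the variables occurring in $P$, and replaces $I_1,I_2$ by their images; I may then assume $P=\mm_1 S+\mm_2 S$ where $\mm_i$ is the graded maximal ideal of $S_i$ (restricting to the variables in $P_i$). By the depth criterion, $P\in\Ass_S(I^k)$ iff $\depth\bigl((S/I^k)_P\bigr)=0$. Using the decomposition $I^k S_P=\sum_{a+b=k}(I_1^a)_{P_1}(I_2^b)_{P_2}$ together with the tensor-product depth formula, the depth of $S_P/I^k_P$ is controlled by the minimum over the splittings $a+b=k$ of $\depth(S_1/I_1^a)_{P_1}+\depth(S_2/I_2^b)_{P_2}$, and this minimum is $0$ precisely when there is a splitting making both summands vanish simultaneously. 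That forces $P_1\in\Ass_{S_1}(I_1^{a})$ and $P_2\in\Ass_{S_2}(I_2^{b})$ for some $a,b$. The main obstacle I anticipate is pinning down the index arithmetic $a+b=k$ versus $k_1+k_2=k+1$: the shift by one arises because in the shortest such splitting one of the two factors may contribute its \emph{minimal} prime (where the relevant power is $1$, not $0$), and I would need to handle carefully the boundary cases $a=0$ or $b=0$, i.e. when $P_i$ is associated to $I_i^{k_i}$ only for $k_i\ge 1$ while the other factor already realizes $P_{3-i}$ as a minimal prime. Reconciling this offset — checking that the correct bookkeeping is $k_1+k_2=k+1$ rather than $k$ — is where I would spend the most care, verifying it on the extreme cases and then arguing the general splitting by the same depth-additivity identity.
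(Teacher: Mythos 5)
There is a genuine gap at the step you yourself identify as the ``technical heart.'' The K\"unneth-type statement you invoke --- that $\Ass$ (equivalently depth) is additive on $M\tensor_\KK N$ --- applies to an honest tensor product of an $S_1$-module with an $S_2$-module. But $S/I^k$ is not of that form: since $I^k=\sum_{a+b=k}I_1^aI_2^bS$, the quotient $S/I^k$ is a quotient of $S_1\tensor_\KK S_2$ by a sum of ``mixed'' ideals, and your assertion that its depth is ``controlled by the minimum over the splittings $a+b=k$'' of the sums of depths is exactly the content of the lemma, not something the tensor-product formula gives you. Proving such a formula requires filtering $S/I^k$ by subquotients that \emph{are} tensor products (this is essentially the H\`a--Nguyen--Trung--Trung approach to depth of powers of sums of ideals), and when one does that correctly the index set that comes out is $k_1+k_2=k+1$ with $k_1,k_2\geq 1$ --- not $a+b=k$. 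So the offset you defer to the end is not a boundary technicality to be patched afterwards: the depth identity as you state it would produce the wrong index set, and fixing it is the whole lemma. In particular the claim ``this minimum is $0$ precisely when there is a splitting making both summands vanish simultaneously'' is unsubstantiated for the module $S_P/I^k_P$ you are actually studying.

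The paper avoids all of this with a direct colon-ideal computation: given $P=I^k:u$ with $u\in I^{k-1}\setminus I^k$, factor $u=u_1u_2$ with $u_i$ in the variables of $S_i$, note $u_i\in I_i^{l_i}\setminus I_i^{l_i+1}$ with $l_1+l_2=k-1$, and verify by hand that $P_i=I_i^{l_i+1}:u_i$; the converse direction is the same computation run backwards. The shift $k_1+k_2=(l_1+1)+(l_2+1)=k+1$ then falls out automatically rather than having to be reverse-engineered. If you want to salvage your route, you would need to either prove the depth/associated-primes formula for $S/(I_1+I_2)^k$ from scratch (which subsumes the lemma) or cite it from the literature; as written, the argument assumes what it needs to prove.
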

\begin{proof}
  Let $P\in \Ass_S(I^k)$. Then $P=P_1S+P_2S$, where $P_1=P\cap S_1$ and $P_2=P\cap S_2$. Moreover, $P=I^k:u$ for some monomial $u\in I^{k-1}\setminus I^k$. We consider $u=u_1u_2$, where $u_1$ and $u_2$ are monomials in $S_1$, respectively $S_2$. It implies that $u_1\in I_1^{l_1}\setminus I_1^{l_1+1}$ and $u_2\in I_2^{l_2}\setminus I_2^{l_2+1}$ for some $l_1, l_2\in\{0,1,\ldots, k-1\}$ and $l_1+l_2=k-1$. Now we prove that \[P_1=I_1^{l_1+1}:u_1\text{ and }P_2=I_2^{l_2+1}:u_2.\] If $v_1\in G(P_1)$ and $v_2\in G(P_2)$, then $v_1, v_2\in G(P)$.  Thus, $v_1u, v_2u\in I^k$. Since $v_1u=(v_1u_1)u_2$ and $v_2u=u_1(v_2u_2)$, we obtain $v_1u_1\in I^{k-l_2}=I^{l_1+1}$ and $v_2u_2\in I^{k-l_1}=I^{l_2+1}$. In other words, $v_1\in I_1^{l_1+1}:u_1$ and $v_2\in I_2^{l_2+1}:u_2$. Now let $w_1\in I_1^{l_1+1}:u_1$ and $w_2\in I_2^{l_2+1}:u_2$ and we may suppose that $w_1\in S_1$ and $w_2\in S_2$. Then $w_1u=w_1u_1u_2\in I^{l_1+1+l_2}=I^k$ and $w_2u=u_1w_2u_2\in I^{l_1+l_2+1}=I^k$. Therefore, $w_1\in P\cap S_1=P_1$ and $w_2\in P\cap S_2=P_2$.

  We conclude that $P_1=I_1^{k_1}:u_1$ and $P_2=I_2^{k_2}:u_2$, where $k_1=l_1+1$ and $k_2=l_2+1$. Notice that $k_1+k_2=l_1+1+l_2+1=k+1$.

  Conversely, let $P_1\in \Ass_{S_1}(I_1^{k_1})$ and $P_2\in \Ass_{S_2}(I_2^{k_2})$ for some positive integers $k_1,k_2$ with $k_1+k_2=k+1$. We consider $P_1=I^{k_1}:u_1$ for some monomials $u_1\in I_1^{k_1-1}\setminus I^{k_1}$ and $P_2=I^{k_2}:u_2$ for some monomials $u_2\in I_2^{k_2-1}\setminus I^{k_2}$. Notice that $u_1u_2\in I^{k_1-1+k_2-1}=I^{k-1}$ and $u_1u_2\notin I^k$ because $u_1\in S_1$ and $u_2\in S_2$. In what follows, we show that \[P=I^k:u_1u_2.\]

  If $u\in G(P)$, then $u\in G(P_1)$ or $u\in G(P_2)$. In other words, $uu_1u_2\in I_1^{k_1}I_2^{k_2-1}\subset I^k$ or $uu_1u_2\in I_1^{k_1-1}I_2^{k_2}\subset I^k$. Hence, we have $u\in I^k:u_1u_2$ and $P\subseteq I^k:u_1u_2$.

  If $v\in I^k:u_1u_2$, then we consider $v=v_1v_2$ with $v_1\in S_1$ and $v_2\in S_2$. In the case that $v_1u_1\in I_1^{k_1-1}\setminus I_1^{k_1}$ and $v_2u_2\in I_2^{k_2-1}\setminus I_2^{k_2}$, we obtain $vu_1u_2=(v_1u_1)(v_2u_2)\notin I^k$, which is false. So, $v_1u_1\in I_1^{k_1}$ or $v_2u_2\in I_2^{k_2}$ which implies that $v_1\in P_1$ or $v_2\in P_2$. Therefore, we obtain $v\in P$ and $I^k:u_1u_2\subseteq P$.
\end{proof}

\medskip

Let $G$ be a  graph on the vertex set $[n]$. For a subset $A\subset [n]$, one denotes $N(A)$ the set of all the vertices of $G$ which are
adjacent to some vertices in $A.$ In other words, $N(A)$ is the  set of all the neighbors of the  vertices in $A.$ In addition, if $A\subset [n]$, $G\setminus A$ denotes the induced subgraph of $G$ on the vertex set $[n]\setminus A.$ If $A=\{j\},$ we write $G\setminus j$ instead of $G\setminus \{j\}.$

\begin{Theorem}\label{graph}
Let $G$ be a connected graph on the vertex set $[n]$ and $I=I(G)$ the edge ideal of $G$. Suppose that $G$ contains an odd cycle and
let \[k=\min\{j: G \text{ has an odd cycle } C_{2j+1}\}.\]
Then $\Ass^{\infty}(I)=\Min(I)\cup\{\mm\}$ if and only if, for every odd cycle $C$ of $G,$ we have $V(C)\cup N(C)=[n].$
Moreover, if the above condition holds, then $\Ass(I^m)=\Min(I)$ if $m\leq k$ and $\Ass(I^m)=\Min(I)\cup \{\mm\}$ if $m\geq k+1.$
\end{Theorem}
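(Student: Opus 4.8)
The plan is to combine the persistence property with an explicit witness-monomial computation and the splitting Lemma~\ref{split}. Throughout I use that by \cite[Lemma~2.12]{BMV} the chain $\Ass(I)\subseteq\Ass(I^2)\subseteq\cdots$ is ascending, so $\Ass^{\infty}(I)=\bigcup_{m\ge1}\Ass(I^m)$; that $\Min(I)\subseteq\Ass(I^m)$ for every $m$; and that $P=(x_i:i\in T)\in\Ass(I^m)$ iff the graded maximal ideal of $S_P$ is associated to $I_P^m$, where $I_P=I(G[T\setminus U])+(x_i:i\in U)$ with $U=\{i\in T:N(i)\not\subseteq T\}$, the two summands lying in disjoint sets of variables.

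First I would establish the existence step: if $C$ is an odd cycle of length $2s+1$ with $V(C)\cup N(C)=[n]$, then $\mm\in\Ass(I^{s+1})$, with witness $u_0=\prod_{i\in V(C)}x_i$. For $i\in V(C)$ the two cycle-edges at $i$ together with a perfect matching of the remaining $2s-2$ cycle vertices give $x_iu_0\in I^{s+1}$; for $i\in N(C)\setminus V(C)$ an edge from $i$ to $V(C)$ together with a perfect matching of the other $2s$ cycle vertices does the same. By the hypothesis these $i$ exhaust $[n]$, so $\mm\subseteq I^{s+1}:u_0$, while $\deg u_0=2s+1<2s+2$ forces $u_0\notin I^{s+1}$; hence $I^{s+1}:u_0=\mm$. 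Applying this to the shortest odd cycle $C_{2k+1}$ gives $\mm\in\Ass(I^{k+1})$, and persistence propagates it to all $m\ge k+1$. For uniqueness, suppose $P=(x_i:i\in T)\in\Ass(I^m)$ is embedded. If $T\setminus U=\emptyset$ then $I_P$ is the maximal ideal of $S_P$, giving $P\in\Ass(I)=\Min(I)$, a contradiction; so $T\setminus U\neq\emptyset$, and Lemma~\ref{split} shows the maximal ideal of $S_P$ is associated to $I_P^m$ only if the graded maximal ideal of $\KK[x_i:i\in T\setminus U]$ is associated to a power of $I(G[T\setminus U])$. Were $G[T\setminus U]$ bipartite, \cite[Theorem~5.9]{SVV} would make the latter ideal normally torsionfree, so this maximal ideal would be a minimal vertex cover prime, forcing $T\setminus U=\emptyset$ — false; hence $G[T\setminus U]$ contains an odd cycle $C'$, all of whose neighbours in $G$ lie in $T$, so that $[n]=V(C')\cup N(C')\subseteq T$ and $P=\mm$. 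This proves $\Ass(I^m)\subseteq\Min(I)\cup\{\mm\}$ for all $m$, and with the existence step gives the ``if'' direction $\Ass^{\infty}(I)=\Min(I)\cup\{\mm\}$.

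To upgrade this to the ``moreover'' equalities it remains to show $\mm\notin\Ass(I^m)$ for $m\le k$, which I expect to be the main obstacle. I would use the description of membership in powers: $w\in I(G)^m$ iff $G$ carries an edge-submultigraph with $m$ edges whose degree vector is dominated by $\exp w$ (a $b$-matching of size $m$). A witness $u$ with $I^m:u=\mm$ then has $b$-matching number $m-1$ at $\exp u$ but $m$ at $\exp u+e_i$ for every $i$; an augmenting-trail analysis of this everywhere-tight configuration extracts an odd closed walk, hence an odd cycle, of length at most $2m-1$. Since the odd girth is $2k+1$ this forces $2m-1\ge 2k+1$, i.e. $m\ge k+1$. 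Combined with uniqueness this yields $\Ass(I^m)=\Min(I)$ for $m\le k$ and $\Ass(I^m)=\Min(I)\cup\{\mm\}$ for $m\ge k+1$; the first power carrying $\mm$ may also be located through \cite[Theorem~3.3]{CMS}.

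For the converse I argue by contraposition. Given an odd cycle $C_0$ of length $2s+1$ with $D_0=V(C_0)\cup N(C_0)\subsetneq[n]$, let $W$ be a minimal vertex cover of $G_0=G[[n]\setminus D_0]$ and put $T=D_0\cup W$. Then $T$ is a vertex cover of $G$, it is not minimal (any cycle vertex, whose neighbours all lie in $D_0$, may be deleted), and $T\subsetneq[n]$ since the complement of $W$ in $V(G_0)$ is a nonempty independent set; minimality of $W$ gives each $w\in W$ a private neighbour outside $T$, so $W\subseteq U$. The witness $u_0=\prod_{i\in V(C_0)}x_i$ now shows $P_T\in\Ass(I^{s+1})$: cycle and neighbour vertices are treated as above, and for $w\in W$ one combines $u_0\in I_P^{\,s}$ with $x_w\in I_P$, while $u_0\notin I_P^{\,s+1}$ since no cycle vertex lies in $U$, so that $u_0$ would have to be divisible by $s+1$ edges, contradicting its degree. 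Thus $P_T$ is an embedded associated prime different from $\mm$, whence $\Ass^{\infty}(I)\neq\Min(I)\cup\{\mm\}$. This construction is an instance of \cite[Theorem~3.3]{CMS}.
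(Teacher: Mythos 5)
Most of your argument is sound, and your uniqueness step is a genuinely different (and arguably cleaner) route than the paper's: instead of inducting on $|V(G)|$ and localizing at $(x_i:i\neq j)$ for a vertex $j$ with $x_j\notin P$, splitting off $G\setminus N[j]$, you localize at the embedded prime $P=(x_i:i\in T)$ itself, split $I_P$ with Lemma~\ref{split}, and observe that non-bipartiteness of $G[T\setminus U]$ produces an odd cycle all of whose $G$-neighbours lie in $T$, so the domination hypothesis forces $T=[n]$. Your contrapositive for the converse is also fine; it amounts to reproving directly the instance of \cite[Theorem~3.3]{CMS} that the paper merely cites, and your check that $T=D_0\cup W$ is a non-minimal vertex cover properly contained in $[n]$ is exactly the point (the paper instead distinguishes whether $V(C)\cup N(C)$ is already a vertex cover or admits two distinct minimal completions; both work). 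The existence step with the witness $\prod_{i\in V(C)}x_i$ coincides with the paper's.

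The genuine gap is the step $\mm\notin\Ass(I^m)$ for $m\le k$. You reduce it to the claim that a witness $u$ with $I^m:u=\mm$ forces an odd closed walk of length at most $2m-1$, to be extracted by ``an augmenting-trail analysis of this everywhere-tight configuration'' --- but you never perform that analysis, and it is not routine: one must control how the $m$-edge $b$-matchings witnessing $x_iu\in I^m$ for the various $i$ interact, and the usual symmetric-difference arguments for matchings do not transfer verbatim to $b$-matchings of multigraphs. You flag this yourself as ``the main obstacle,'' and as written the proposal asserts the key combinatorial lemma rather than proving it. The paper avoids the issue by induction on the number of vertices: the base case $G=C_{2k+1}$ is \cite[Lemma~3.1]{CMS}, and in the inductive step one picks a cycle vertex $j$ with $x_j$ not dividing the witness, deletes it via \cite[Lemma~2.6]{CMS}, and gets a contradiction because $[n]\setminus\{j\}$ is not a minimal vertex cover of $G\setminus j$. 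To complete your write-up you must either carry out the trail analysis in full or substitute such a deletion argument.
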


\begin{proof}
Let $\Ass^{\infty}(I)=\Min(I)\cup\{\mm\}$ and assume that there exists an odd cycle $C=C_{2j+1}$ for some $j\geq 1$
such that $V(C)\cup N(C)\subsetneq [n].$ By \cite[Theorem 3.3]{CMS}, it follows that \[P=(x_i:i\in V(C)\cup N(C)\cup W)\] is an associated
prime ideal of $I^m$ for $m\geq j+1$, where $W$ is any minimal subset of $[n]$ for which $V(C)\cup N(C)\cup W$ is a vertex cover of $G.$
If $V(C)\cup N(C)$ is a vertex cover of $G,$ then $P=(x_i:i\in V(C)\cup N(C))$ is an associated prime of $I^m$ for $m\geq j+1$ with
$P\not\in \Min(I)$ and $P\neq \mm.$ This is a contradiction to our hypothesis. If $V(C)\cup N(C)$ is not a vertex cover of $G,$ then
there exists at least two minimal subsets $W_1,W_2$ of $[n]$ for which $V(C)\cup N(C)\cup W_1$ and $V(C)\cup N(C)\cup W_2$ are vertex covers of $G.$ Therefore, we get at least two minimal prime ideals \[P_1=(x_i:i\in V(C)\cup N(C)\cup W_1)\text{ and }
P_2=(x_i:i\in V(C)\cup N(C)\cup W_2)\] which are not minimal primes of $I$ and are associated with $I^m$ for $m\geq j+1,$ a contradiction.

Let $V(C)\cup N(C)=[n]$ for every odd cycle of $G.$ In particular, this equality holds for the cycle $C=C_{2k+1}$ with the smallest number of vertices.  First we show that $\mm\in \Ass(I^m)$ for $m\geq k+1.$ Since, by \cite[Theorem 2.15]{BMV}, $I=I(G)$ has the persistence property, it is enough to show that $\mm\in \Ass(I^{k+1}).$ Let $u=\prod_{i\in V(C)}x_i$ be the product of all the variables of cycle
$C.$ Clearly, $u\in I^k$ and since $\deg(u)=2k+1,$ we have $u\notin I^{k+1}$ which implies that $I^{k+1}:u\subseteq \mm.$ It is also easily seen that
 for all  $j\in V(C)\cup N(C), x_j u\in I^{k+1} $. Therefore, we have $\mm\subseteq I^{k+1}:u $ since $V(C)\cup N(C)=[n].$

Further on, we show that $\mm\not\in \Ass(I^m)$ for $m\leq k.$ We proceed by induction on the number of vertices of $G$. If
$G=C,$ then the claim follows by \cite[Lemma 3.1]{CMS}. Now let $V(C)\subsetneq [n]$ and assume that there exists $m\leq k$ such that
$\mm\in \Ass(I^m).$ By the persistence property of edge ideals, we may assume that $\mm\in \Ass(I^k).$ Thus, there exists a monomial
$v\not\in I^k$ such that $I^k:v=\mm. $ There exists some vertex $j\in V(C)$ such that $x_j$ does not divide $v,$ since, otherwise,
$v\in I^k.$ Let $J$ be the edge ideal of $H=G\setminus j$ and $Q=(x_i: i\neq j).$  Then, by \cite[Lemma 2.6]{CMS}, it follows that $Q\in \Ass(J^m).$
Note that the graph $H$ is either bipartite or it has an odd cycle of length greater than $2k+1.$ If $H$ is bipartite, then $Q\in \Min(J)$.  If $H$ is not bipartite,  the inductive  hypothesis implies  that $Q\in \Min(J).$ But this is not possible since the set
$[n]\setminus \{j\}$ is not a minimal vertex cover of $H.$

What is left is to show that, in our hypothesis, if $P\in \Ass(I^m) $ for some integer $m\geq 1$ and $P\neq \mm,$ then $P\in \Min(I).$
Let $P\in \Ass(I^m) $ for some integer $m\geq 1$ with $P\neq \mm.$ We proceed again by induction on the number of vertices of $G.$ The claim is obvious if $G=C$ by \cite[Lemma 3.1]{CMS}. Let $V(C)\subsetneq [n]$. Then there exists a vertex $j\in V(G)$ such that
$x_j\not\in P.$ We localize at the prime ideal $Q=(x_i:i\neq j).$ Then $I_Q=(I_1,I_2)$ where $I_1$ is generated by all the variables
$x_i$ with $i\in N(j)$ and $I_2=I(G\setminus N[j])$. Here $N[j]$ denotes the set $N(j)\cup\{j\}.$ By \cite[Corollary 2.3]{CMS}, it
follows that $P_Q=(I_1,P_2)$ where $P_2\in \Ass(I_2^{m^\prime})$ for some $m^\prime \leq m.$ If $G\setminus N[j]$ is bipartite, then $P_2$
gives a minimal vertex cover of $G\setminus N[j]$ and, thus, $P$ gives a minimal vertex cover of $I,$ equivalently, $P\in \Min(I).$
Let $G\setminus N[j]$ be not bipartite. If it is connected, by induction it follows that $P_2$ is a minimal prime ideal of $I_2$ and, consequently, $P\in \Min(I).$ If $G\setminus N[j]$ is not connected, say it has the connected components $G_1,\ldots,G_c$ for some
$c\geq 2,$ then $P_2$ is a sum of minimal prime ideals of $I(G_1),\ldots, I(G_c),$ and again, it follows that $P\in \Min(I).$
\end{proof}

\begin{Remark}
Let $G$ be a graph on the vertex set $[n]$ and $I=I(G)\subset S$ be its edge ideal. Condition $V(C)\cup N(C)=[n]$, for every odd cycle $C$ of $G$, in the hypothesis of previous theorem needs to be checked for every odd cycle $C$ of $G$. Indeed, if $G$ is the graph of Figure~\ref{fig 0}, then \[|\Ass(I(G)^2)|=13>8=|\Min(I(G))|.\]
\begin{figure}
\centering
  \begin{tikzpicture}[domain=0:9]
        \draw[black] (0,0) node[below] {$4$} -- (4,0) node[below] {$3$} -- (4,4) node[above] {$2$} -- (0,4) node[above] {$1$} -- (0,0);
        \draw[black] (0,0) -- (1,1) node[left] {$7$} -- (3,1) node[right] {$6$} -- (4,0);
        \draw[black] (0,4) -- (2,3) node[above] {$5$} -- (4,4);
        \draw[black] (1,1) -- (2,3) -- (3,1) -- (2,2) node[below] {$8$} -- (1,1);
        \draw[black] (2,2) -- (2,3);
    \end{tikzpicture}
\caption{$G$}\label{fig 0}
\end{figure}

Notice that $V(C_1)\cup N(C_1)=[8]$ for $C_1=(\{5,6\},\{6,7\},\{7,5\})$, while for $C_2=(\{6,7\},\{7,8\},\{8,6\})$, we have $V(C_2)\cup N(C_2)=\{3,4,5,6,7,8\}\neq [8]$.
\end{Remark}

Let $G$ be a graph on the vertex set $[n]$ and $I=I(G)\subset S$ its edge ideal. The symbolic Rees algebra of $I$ is
\[\MR_s(I)=\oplus_{k\geq 0}I^{(k)}t^k,\] where $I^{(k)}$ is the $k$-th symbolic power of $I.$ This algebra is actually the vertex cover algebra of $G$ which is generated as an $S$-algebra by all the monomials $\xb^\ab t^b$ where $\ab$ is an indecomposable vertex cover
of $G$ of order $b$. Recall from \cite{HHT} that a vector $\ab=(a_1,\ldots,a_n)\in \NN^n$ is a vertex cover of $G$ of order $b$ if $a_i+a_j\geq b$ for all $i,j$ with $\{i,j\}\in E(G).$ A vertex cover $\ab$ of order $b$ is decomposable if there exists a vertex cover $\ab_1$ of order $b_1$ and a vertex cover $\ab_2$ of order $b_2$ such that $\ab=\ab_1+\ab_2$ and $b=b_1+b_2$. The vector $\ab$ is called indecomposable if $\ab$ is not decomposable.

In \cite[Proposition 5.3]{HHT}, it was shown that the graded $S$-algebra $R_s(I)$ is generated by  the monomial $x_1x_2\cdots x_nt^2$ and the monomials $\xb^\ab t$ such that $\ab$ is  a minimal vertex cover of $G$ if and only if for every odd cycle $C$ of $G$ and for every vertex $i\in [n],$ there exists a vertex $j\in C$ such that $\{i,j\}\in E(G).$ This latter condition characterizes the almost normally torsionfree edge ideals. Therefore, we can derive the characterization of almost normally torsionfree edge ideals in terms of their associated symbolic Rees algebras.

\begin{Corollary}\label{Reesgraphs}
Let $G$ be a graph on the vertex set $[n].$ Then the following statements are equivalent:
\begin{itemize}
	\item [(i)]  The edge ideal $I(G)$ is almost normally torsionfree;
	\item [(ii)] The symbolic Rees algebra $\MR_s(I(G))$  is generated as a graded $S$-algebra by the monomial $x_1x_2\cdots x_nt^2$ and the monomials $\xb^\ab t$ such that $\ab$ is  a minimal vertex cover of $G.$
\end{itemize}
\end{Corollary}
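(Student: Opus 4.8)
The plan is to prove Corollary~\ref{Reesgraphs} by combining Theorem~\ref{graph} with the cited result \cite[Proposition 5.3]{HHT}, which establishes a structural equivalence for the symbolic Rees algebra. The key observation is that both statements (i) and (ii) reduce to the \emph{same} combinatorial condition on $G$, namely that for every odd cycle $C$ of $G$ and for every vertex $i\in[n]$, there exists a vertex $j\in C$ with $\{i,j\}\in E(G)$. I would therefore argue that $I(G)$ is almost normally torsionfree if and only if this combinatorial condition holds, and separately that (ii) is equivalent to the same condition, so that (i) and (ii) are equivalent to each other.

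First I would reformulate the combinatorial condition in the language already used in Theorem~\ref{graph}. The requirement that for every odd cycle $C$ and every vertex $i\in[n]$ there is some $j\in C$ adjacent to $i$ is precisely the statement that every vertex of $[n]$ lies in $V(C)\cup N(C)$; that is, $V(C)\cup N(C)=[n]$ for every odd cycle $C$ of $G$. I would note that since the paper has reduced questions about almost normally torsionfree edge ideals to connected graphs via Lemma~\ref{split}, and since a bipartite graph has no odd cycle (so the condition is vacuously satisfied and $I(G)$ is normally torsionfree, hence almost normally torsionfree by the remark following the Definition), the genuinely new content is exactly the case where $G$ contains an odd cycle. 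In that case Theorem~\ref{graph} gives directly that $\Ass^\infty(I)=\Min(I)\cup\{\mm\}$, and thus that $I(G)$ is almost normally torsionfree, if and only if $V(C)\cup N(C)=[n]$ for every odd cycle $C$.

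Next I would invoke \cite[Proposition 5.3]{HHT} to match up statement (ii) with the same condition. That proposition asserts that $\MR_s(I)$ is generated as a graded $S$-algebra by $x_1x_2\cdots x_n t^2$ together with the monomials $\xb^\ab t$ coming from the minimal vertex covers $\ab$ of $G$ precisely when for every odd cycle $C$ and every vertex $i$ there is a vertex $j\in C$ adjacent to $i$ — i.e.\ exactly when $V(C)\cup N(C)=[n]$ for every odd cycle $C$. Chaining the two equivalences through this common condition yields (i) $\Leftrightarrow$ (ii). One subtlety I would address is the passage between the connected case handled by Theorem~\ref{graph} and the general (possibly disconnected) graph in the Corollary: here I would use Lemma~\ref{split}, which shows that the associated primes of powers decompose over connected components, so that $I(G)$ is almost normally torsionfree exactly when the combinatorial condition holds on each component, matching the way the generators of $\MR_s(I)$ in \cite[Proposition 5.3]{HHT} also decompose over components.

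The main obstacle I anticipate is not any deep new argument but rather the careful bookkeeping needed to confirm that the generation statement in (ii) is genuinely the same condition as the one characterizing almost normal torsionfreeness, and in particular to handle the boundary cases cleanly: the bipartite case (where $\mm$ never appears and the extra generator $x_1\cdots x_n t^2$ is superfluous in the sense that $\MR_s(I)$ is already the ordinary Rees algebra), and the disconnected case. Once the combinatorial condition $V(C)\cup N(C)=[n]$ for all odd cycles $C$ is identified as the pivot shared by Theorem~\ref{graph} and \cite[Proposition 5.3]{HHT}, the equivalence of (i) and (ii) follows immediately, so the proof should be short and essentially a translation between the two formulations.
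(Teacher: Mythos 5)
Your proposal matches the paper's own argument: the paper derives the corollary exactly by observing that the generation condition in \cite[Proposition 5.3]{HHT} and the condition $V(C)\cup N(C)=[n]$ for every odd cycle $C$ from Theorem~\ref{graph} are the same combinatorial condition, so (i) and (ii) are chained through it. Your additional care with the bipartite and disconnected cases is consistent with (and slightly more explicit than) what the paper leaves implicit.
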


\section{Special odd cycles and the associated primes of their powers}

In this section, we study the associated prime ideals of the powers of a facet ideal of a special odd cycle.

Let $S=\KK[x_1,\ldots, x_n]$ be the polynomial ring in $n$ variables over a field $\KK$ and $\Delta$ be a simplicial complex on the vertex set $[n]$ with the facet set $\MF(\Delta).$ For every subset $F\subset [n]$, we set $\xb_F=\prod_{j\in F}x_j$.  Let $I(\Delta)=(\xb_F: F\in \MF(\Delta))\subset S$ be the facet ideal of $\Delta.$

A \textit{cycle of length} $r\geq 2$ in $\Delta$
is an alternate sequence of distinct vertices and facets
\[
\MC: v_1, F_1, v_2, F_2, v_3, \ldots v_r, F_r, v_{r+1}=v_1
\] such that $v_i,v_{i+1}\in F_i$ for $1\leq i\leq r.$  A cycle $\MC$ is called \textit{special} if no facet of it contains more than two vertices of the cycle.

By \cite[Theorem 10.3.16]{HHBook}, if $\Delta$ has no special odd cycle, then the facet ideal $I(\Delta)$ is normally torsionfree, that is,
$\Ass(I(\Delta)^m)=\Min(I(\Delta))$ for $m\geq 1.$

In what follows, we study $\Ass^{\infty}(I)$ when $I=I(\Delta)$ and  $\Delta$ is a special odd cycle.

\begin{Theorem}\label{specialcycle}
Let $\Delta:  v_1, F_1, v_2, F_2, v_3, \ldots v_{2s+1}, F_{2s+1}, v_{2s+2}=v_1$ be a special odd cycle with $s\geq 1.$ Then
$\Ass(I(\Delta)^m)=\Min(I(\Delta))$ for $m\leq s$ and $\Ass(I(\Delta)^m)=\Min(I(\Delta))\cup\{(x_{v_1},\ldots,x_{v_{2s+1}})\}$ for $m\geq s+1.$ In particular, the facet ideal of a special odd cycle is almost normally torsionfree.
\end{Theorem}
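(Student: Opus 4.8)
The plan is to mirror the structure of the proof of Theorem~\ref{graph}, adapting it from edge ideals of graphs to facet ideals of simplicial complexes. I must establish three things: first, that the ``extra'' prime $\mm_{\MC}=(x_{v_1},\ldots,x_{v_{2s+1}})$ does belong to $\Ass(I(\Delta)^m)$ for all $m\geq s+1$; second, that it does \emph{not} appear for $m\leq s$; and third, that no prime ideal other than members of $\Min(I(\Delta))$ and this one $\mm_{\MC}$ ever occurs as an associated prime of any power. I would first fix notation: write $I=I(\Delta)$ and, for each facet $F_i$, note that since the cycle is special, the distinguished vertices satisfy $v_i,v_{i+1}\in F_i$ and no facet contains a third cycle vertex. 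This specialness is the combinatorial input that makes everything work, exactly as the length condition does for graphs.

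\textbf{Producing $\mm_{\MC}$ as an associated prime.} First I would exhibit an explicit colon ideal witnessing $\mm_{\MC}\in\Ass(I^{s+1})$. The natural candidate, by analogy with the witness $u=\prod_{i\in V(C)}x_i$ in Theorem~\ref{graph}, is the monomial $u=x_{v_1}x_{v_2}\cdots x_{v_{2s+1}}$, the product of the cycle vertices. Because the $2s+1$ facets $F_1,\dots,F_{2s+1}$ cover these vertices in an odd cycle pattern, $u$ should lie in $I^{s}$ (one can cover $u$ using $s$ of the generators $\xb_{F_i}$, picking up the odd parity) but fail to lie in $I^{s+1}$ since its ``cycle degree'' is odd. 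This forces $I^{s+1}:u\subseteq\mm_{\MC}$. For the reverse inclusion I would check that $x_{v_j}\,u\in I^{s+1}$ for every $j$, which should follow because multiplying by one more cycle variable makes the total coverable by $s+1$ facet generators. Once $\mm_{\MC}\in\Ass(I^{s+1})$ is established, persistence of associated primes upgrades it to all $m\geq s+1$; here I would invoke the persistence property for facet ideals (or argue it directly via the standard multiplication-by-a-generator trick), so that it suffices to treat the single power $s+1$.

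\textbf{Ruling out $\mm_{\MC}$ for small powers and classifying the rest.} For $m\leq s$ I would show $\mm_{\MC}\notin\Ass(I^m)$ by a localization/induction argument on the number of vertices, paralleling the graph proof: if $\mm_{\MC}\in\Ass(I^m)$ were witnessed by some $v\notin I^m$ with $I^m:v=\mm_{\MC}$, then some cycle vertex $x_{v_j}$ must fail to divide $v$; deleting that vertex yields a smaller complex whose facet ideal is (after the deletion) forced to have no special odd cycle, hence is normally torsionfree by \cite[Theorem~10.3.16]{HHBook}, and this contradicts the surviving colon relation. The final task---showing every associated prime of every power lies in $\Min(I)\cup\{\mm_{\MC}\}$---I would handle by localizing at a prime $P\neq\mm_{\MC}$: then some vertex $x_j\notin P$, and after localizing at $Q=(x_i:i\neq j)$ the facet ideal decomposes, with the link/deletion components carrying a special odd cycle in at most one piece; invoking the normal torsionfreeness of the cycle-free components and an inductive hypothesis on the component still containing a special odd cycle forces $P$ to come from a minimal vertex cover, i.e.\ $P\in\Min(I)$.

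\textbf{Anticipated obstacle.} The technical heart---and the step I expect to be hardest---is the localization decomposition of a facet ideal at $Q=(x_i:i\neq j)$. For edge ideals this splitting into $I_1$ (the star of $j$) and $I_2=I(G\setminus N[j])$ is clean and is quoted from \cite{CMS}; for general facet ideals of a simplicial complex the analogue of ``deleting the closed neighborhood'' is subtler, because a facet may meet $j$ in several ways and the induced structure on the remaining vertices need not again be a single special odd cycle. I would need to verify carefully that after deletion the resulting complex either loses all special odd cycles (landing in the normally torsionfree regime) or retains exactly one on which induction applies, and that the specialness hypothesis is precisely what prevents a facet from secretly connecting the cycle to the deleted vertex in a way that creates a new special odd cycle. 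Making this localization bookkeeping precise, rather than the colon-ideal computations, is where the real work lies.
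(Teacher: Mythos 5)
Your three-step skeleton (exhibit the extra prime at power $s+1$, exclude it for powers $\le s$, show every other associated prime is minimal) is exactly the architecture of the paper's proof, but the central explicit computation in your first step fails. The witness monomial cannot be $u=x_{v_1}\cdots x_{v_{2s+1}}$: the generators of $I(\Delta)$ are the full facet monomials $\xb_{F_i}$, and a facet of a special odd cycle may well contain vertices other than $v_i,v_{i+1}$. Whenever some $|F_i|\ge 3$, no such $\xb_{F_i}$ divides your $u$, so $u\notin I^s$, and likewise $x_{v_j}u$ need not lie in $I^{s+1}$, so $I^{s+1}:u$ does not contain all of $P=(x_{v_1},\dots,x_{v_{2s+1}})$. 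The paper instead takes
$u=\prod_{i=1}^{2s+1}\xb_{F_i\setminus\{v_i,v_{i+1}\}}\cdot\prod_{j=1}^{2s+1}x_{v_j}$,
i.e.\ each cycle vertex once together with all non-cycle vertices of all facets; then alternating products such as $\xb_{F_1}\xb_{F_3}\cdots\xb_{F_{2s+1}}$ divide $x_{v_j}u$ and give $P\subseteq I^{s+1}:u$. There is also a logical slip in your sentence deducing $I^{s+1}:u\subseteq P$: knowing $u\notin I^{s+1}$ only gives $I^{s+1}:u\subseteq\mm$. To exclude the variables outside the cycle you need the observation that, by specialness, every generator of $I$ has degree exactly $2$ in the cycle variables, so a product of $s+1$ generators has cycle-degree $2s+2$ and cannot divide $vu$ when $v$ involves no cycle variable and $u$ has cycle-degree $2s+1$. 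This parity count is where the hypothesis actually enters, and it is absent from your sketch.

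Two smaller points. Persistence of associated primes is not available for arbitrary facet ideals, so you cannot simply ``invoke the persistence property'' to pass from $m=s+1$ to $m>s+1$; the paper reruns the same colon computation with $u(\xb_{F_1})^{m-s-1}$, which is the direct argument you mention as a fallback and is the one you must actually carry out. In your third step the induction on the number of vertices is both unnecessary and not set up (there is no base case or smaller special odd cycle to induct on): since the facet set of $\Delta$ is exactly $\{F_1,\dots,F_{2s+1}\}$, localizing at $(x_i:i\ne j)$ for a cycle vertex $j$ with $x_j\notin Q$ destroys the only special odd cycle, so the localized complex is normally torsionfree outright --- one only has to split off, via Lemma~\ref{split}, the trivial components that appear when $|F_{j-1}|$ or $|F_j|$ equals $2$. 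Your anticipated obstacle about a component ``still containing a special odd cycle'' therefore does not occur, and the localization bookkeeping is the easy part; the hard part is the colon computation you glossed over.
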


\begin{proof}
Without loss of generality, we may assume that the vertices $v_1,\ldots, v_{2s+1}$ of $\Delta$ are labeled as $1,\ldots,2s+1.$
We set $I=I(\Delta)$ and  $P=(x_1,\ldots,x_{2s+1}).$
We split the proof in three steps.

\textit{Step 1.} We show that $P\in \Ass(I^m)$ for $m\geq s+1.$ First, we show that $P\in \Ass(I^{s+1}).$ Let
\[
u=x_1 \xb_{F_1\setminus\{1,2\}}x_2 \xb_{F_2\setminus\{2,3\}}x_3\cdots x_{2s}\xb_{F_{2s}\setminus\{2s,2s+1\}}x_{2s+1}\xb_{F_{2s+1}\setminus
\{2s+1,1\}}.
\]
We claim that $u\not\in I^{s+1}.$ Indeed, if $u\in I^{s+1},$ then there exists $\xb_{F_{j_1}}, \ldots,\xb_{F_{j_{s+1}}}\in I$
with $1\leq j_1\leq \cdots\leq j_{s+1}\leq 2s+1,$ such that $\prod_{k=1}^{s+1}\xb_{F_{j_k}}$ divides $u.$

If there exists some $1\leq j\leq 2s+1$ such that $\xb_{F_j}^2 \mid u,$ then
\[
\xb_{F_j} \mid x_1 \xb_{F_1\setminus\{1,2\}}x_2\cdots x_{j-1}\xb_{F_{j-1}\setminus \{j-1,j\}} \xb_{F_{j+1}\setminus \{j+1,j+2\}}
x_{j+2}\cdots x_{2s+1} \xb_{F_{2s+1}\setminus\{2s+1,1\}}.\]
Clearly, $x_j$ does not divide $\xb_{F_{j-1}\setminus \{j-1,j\}}$, thus there exists $\ell <j-1$ or $\ell\geq j+1$ such that
$F_\ell \ni j.$ But this is a contradiction, since $\Delta$ is a special cycle, thus no facet may contain more than two vertices of the cycle. Therefore, we may assume that $1\leq j_1< \cdots< j_{s+1}\leq 2s+1,$ and $\prod_{k=1}^{s+1}\xb_{F_{j_k}}$ divides $u.$ If
$j_k\geq j_{k-1}+2$ for all $2\leq k\leq s+1,$ we get $j_1=1,j_2=3,\ldots,j_{s+1}=2s+1$ and $\prod_{k=1}^{s+1}\xb_{F_{2k-1}}\mid u.$ Then it follows that $x_1^2\mid u,$ which is false. Therefore, there must be an index $j$ such that $\xb_{F_j}\xb_{F_{j+1}}\mid u.$ This yields
\[\xb_{F_{j+1}}\mid x_1 \xb_{F_1\setminus\{1,2\}}x_2\cdots x_{j-1}\xb_{F_{j-1}\setminus \{j-1,j\}} \xb_{F_{j+1}\setminus \{j+1,j+2\}}
x_{j+2}\cdots x_{2s+1} \xb_{F_{2s+1}\setminus\{2s+1,1\}}\]
which implies that $x_{j+1}$ divides the right side  monomial, a contradiction since $\Delta$ is a special cycle.

Thus, we have proved that $u\not\in I^{s+1}.$

We show that $x_j u\in I^{s+1}$ for $1\leq j\leq 2s+1.$

We observe that we may write \[x_1 u=\xb_{F_1}\xb_{F_3}\cdots \xb_{F_{2s-1}}\xb_{F_{2s+1}} w\] for some monomial $w$. This shows that
$x_1 u\in I^{s+1}.$ Let $j\geq 2,$ $j$ even. Then we see that
\[\xb_{F_1}\xb_{F_3}\cdots \xb_{F_{j-1}}\xb_{F_j}\xb_{F_{j+2}}\cdots \xb_{F_{2s}}\] divides $x_ju,$ which shows that $x_ju\in I^{s+1}.$
If $j\geq 2$, $j$ odd, then \[\xb_{F_1}\xb_{F_3}\cdots \xb_{F_{j}}\xb_{F_{j+1}}\xb_{F_{j+3}}\cdots \xb_{F_{2s}}\] divides $x_ju,$  thus
$x_ju\in I^{s+1}.$  Consequently, we obtained $P\subset I^{s+1}:u.$

For the inclusion $I^{s+1}:u\subset P,$ let us consider some monomial $v\in I^{s+1}:u$ and assume that $v\not\in P.$ This means that
none of the variables $x_1,\ldots,x_{2s+1}$ divides $v.$ As $vu\in I^{s+1}$, there exists $G_1,\ldots,G_{s+1}\in \MF(\Delta)$ such that
$w=\xb_{G_1}\cdots \xb_{G_{s+1}}$ divides $vu.$ Then \[\sum_{k=1}^{2s+1}\deg_{x_k}(w)=2s+2=\sum_{k=1}^{2s+1}\deg_{x_k}(vu)=2s+1,\] contradiction. Therefore, $I^{s+1}:u=P,$ which shows that $P\in \Ass(I^{s+1}).$

In order to show that $P\in \Ass(I^m)$ for $m> s+1,$ we may argue in a similar way as above, but using the monomial $w=u(\xb_{F_1})^{m-s-1}$
instead of $u.$

\textit{Step 2.} We show that  if $m\leq s,$ then $P\not\in \Ass(I^m).$ Assume there exists $u\notin I^m$ for some $m\leq s$ such that
$P=I^m:u.$ Then there exists some vertex $j$ of $\Delta$ with  $1\leq j\leq 2s+1,$ such that $x_j$ does not divide $u.$ Indeed, let us assume that $x_1x_2\cdots x_{2s+1}$ divides $u$. But $u$ cannot be divisible by the product of all the variables since, otherwise, we would
get $u\in I^s\subseteq I^m.$ Then there exists some vertices in $\Delta\setminus\{1,2,\ldots,2s+1\},$ say $k_1,\ldots,k_a,$ such that  the product
$x_{k_1}\cdots x_{k_a}$ does not divide  $u.$ Then  $x_{k_1}\cdots x_{k_a} u$ is divisible by the product of all the variables, thus
$x_{k_1}\cdots x_{k_a} u\in I^s\subseteq I^m$ which implies that $x_{k_1}\cdots x_{k_a} \in P,$ contradiction. Consequently, we have proved that there exists some vertex $j$ of $\Delta$ with  $1\leq j\leq 2s+1,$ such that $x_j$ does not divide $u.$

Let $\Delta^\prime$ the simplicial complex obtained from $\Delta$ by removing the facets $F_{j-1}, F_j$ which contain the vertex $j$ and
$I^\prime=I(\Delta^\prime).$ Then $\Delta^\prime$ has no special odd cycle, thus $\Ass((I^\prime)^m)=\Min(I^\prime)$ for all $m\geq 1.$
We will show that $(I^\prime)^m: u=P^\prime$ where $P^\prime$ is the prime ideal obtained from $P$ by removing the generator $x_j.$ This leads to a contradiction since it implies that $P^\prime\in \Ass((I^\prime)^m),$ but, on the other hand, $\{1,2,\ldots,j-1,j+1,\ldots,2s+1\}$ is not a minimal vertex cover of $\Delta^\prime.$

Let $i\neq j,$ $1\leq i\leq 2s+1.$ Then $x_iu\in I^m.$ Since $x_iu$ is not divisible by $x_j,$ it follows that $x_i u\in (I^\prime)^m,$
hence $P^\prime\subset (I^\prime)^m: u.$ Let now assume that $P^\prime\subsetneq (I^\prime)^m: u.$ Then there exists
a minimal  monomial generator $v\in (I^\prime)^m: u$ such that $v\not\in P^\prime.$ As $vu\in (I^\prime)^m\subset I^m,$ it follows that $v\in I^m:u=P. $ Thus we may write $v=x_j^t w$ where $t$ is a positive integer and $w$ is a monomial which is not divisible by $x_j.$
Then we get $x_j^t w u=u v\in (I^\prime)^m,$ thus  $w u\in (I^\prime)^m: x_j^t=(I^\prime)^m.$ This implies that
$w\in (I^\prime)^m:u,$ a contradiction with the choice of $v.$ Consequently, $P^\prime=(I^\prime)^m:u$ and the proof of Step 2 is completed.

\textit{Step 3.} It remains to show that, for every $m\geq 1,$ if $Q$ is an associated prime of $I^m$ different from $P,$ then $Q$ is a minimal prime ideal of $I.$ Since $Q\neq P,$ there exists $1\leq j\leq 2s+1$ such that $x_j\not\in Q.$ Let us consider the localization
of $I$ with respect to the prime ideal $\pp=(x_i: i\neq j, 1\leq i\leq n)$.
Then $I_{\pp}=I^\prime S_\pp$ where
$I^\prime=I(\Delta^\prime)$ and $\Delta^\prime$ is generated by  the facets
\[F_1,\ldots,F_{j-2}, F_{j-1}\setminus\{j\}, F_{j}\setminus\{j\}, F_{j+1},\ldots,F_{2s+1}\] if $|F_{j-1}|, |F_j|\geq 3.$
If $|F_{j-1}|=2, |F_j|\geq 3,$ then $\Delta^\prime$ has two connected components, one of them trivial consisting of  the facet $\{j-1\}$, and the other
one generated by the facets $F_i$ of $\Delta$ with $i\neq j-2,j-1,j$ and the face $F_j\setminus\{j\}\in \Delta.$
If $|F_{j-1}|\geq 3, |F_j|=2,$ then $\Delta^\prime$ consists of the trivial component $\{j+1\}$ and the component generated by the facets $F_i$ of $\Delta$ with $i\neq j-1, j, j+1$ and the face $F_{j-1}\setminus\{j\}\in \Delta.$
Finally, if $|F_{j-1}|=2, |F_j|=2,$ then in $\Delta^\prime$ we have two trivial components, namely $\{j-1\},\{j+1\}$ and a non-trivial component determined by the facets $F_i$ of $\Delta$ with $i\neq j-2,j-1,j,j+1.$

In the first case, namely if $|F_{j-1}|, |F_j|\geq 3,$ $\Delta^\prime$ has no special odd cycle, thus $I^\prime$ is normally torsionfree, which implies that $Q$ is a minimal prime of $I^\prime$ and, clearly, of $I$ as well. In all the other cases, by applying Lemma~\ref{split},
 we derive that $Q$ is a minimal prime of $I$ which does not contain the vertex  $j.$
\end{proof}

\section{Associated prime ideals of $t$-spread principal Borel ideals generated in degree $3$}

The $t$-spread strongly stable ideals with $t\geq1$ have been recently introduced in \cite{EHQ} and they represent a special class of square-free monomial ideals.

Fix a field $\KK$ and a polynomial ring $S=\KK[x_1,\ldots, x_n]$. Let $t$ be a positive integer.
A monomial $x_{i_1}x_{i_2}\cdots x_{i_d}\in S$ with $i_1\leq i_2\leq\ldots \leq i_d$  is called \emph{$t$-spread} if $i_j- i_{j-1}\geq t$ for $2\leq j \leq d$.

A  monomial ideal in $S$ is called  a \emph{$t$-spread monomial ideal}  if it is generated by  $t$-spread monomials. For example,  \[I=(x_1x_4x_9,x_1x_5x_8,x_1x_5x_9,x_2x_6x_9,x_4x_8) \subset \KK[x_1, \ldots, x_9]\] is a $3$-spread monomial ideal, but not $4$-spread, because $x_1x_5x_8$ is not a $4$-spread monomial.

For a monomial $u\in S$, we set \[\supp(u)=\{i:x_i\mid u\}.\]

\begin{Definition}\cite[Definition 1.1]{EHQ}
Let $I$ be a $t$-spread monomial ideal. Then $I$ is called a {\em $t$-spread strongly stable ideal} if for all $t$-spread monomials $u\in G(I)$, all $j\in\supp(u)$ and all $1\leq i<j$ such that $x_i(u/x_{j})$ is a $t$-spread monomial, it follows that $x_i(u/x_j)\in I$.
\end{Definition}

Let $t$ be a positive integer. A monomial  ideal $I\subset S=\KK[x_1,\ldots, x_n]$ is called \emph{$t$-spread principal Borel} if there exists a monomial $u\in G(I)$ such that $I$ is the smallest $t$-spread strongly stable ideal which contains $u.$ According to \cite{EHQ}, we denote $I=B_t(u)$. For example, for an  integer $d\geq 2,$ if $u=x_{n-(d-1)t}\cdots x_{n-t}x_n$, then $B_t(u)$ is minimally generated by all the $t$-spread monomials of degree $d$ in $S.$

Let $u=x_{i_1}x_{i_2}\cdots x_{i_d}\in S= \KK[x_1,\ldots, x_n]$ be a $t$-spread monomial.
Notice that \[x_{j_1}\cdots x_{j_d}\in G(B_t(u))\] if and only if \[j_1\leq i_1,\ldots, j_d\leq i_d\text{ and }j_k-j_{k-1}\geq t\text{ for }k\in\{2,\ldots,d\}.\]

In what follows, we study some classes of $t$-spread principal Borel ideals which are almost normally torsionfree.

The minimal prime ideals of a $t$-spread principal Borel ideal were determined implicitly in the proof of \cite[Theorem 1.1]{AEL}. Therefore, in the following result, we reformulate the statement of \cite[Theorem 1.1]{AEL}.

\begin{Theorem}\label{ass}
  Let $t\geq 1$ be an integer and $I=B_t(u)$, where $u=x_{i_1}x_{i_2}\cdots x_{i_d}\in S$ is  a $t$-spread monomial. We assume that $i_d=n.$ Then every associated prime ideal of $B_t(u)$ is of one of the following forms:
  \begin{equation}\label{form1}
  (x_1,\ldots, x_{j_1-1}, x_{j_1+t}, x_{j_1+t+1},\ldots, x_{j_2-1}, x_{j_2+t},\ldots,x_{j_{d-1}-1}, x_{j_{d-1}+t}, x_{j_{d-1}+t+1}, \ldots,x_n)
\end{equation}
   with $j_l\leq i_l$ for $1\leq l\leq d-1$ and $j_l-j_{l-1}\geq t$ for $2\leq l\leq d-1$.
\begin{equation}\label{form2}
 (x_1,x_2,\ldots,x_{i_1})
\end{equation}
\begin{equation}\label{form3}
(x_1,\ldots, x_{j_1-1}, x_{j_1+t}, x_{j_1+t+1},\ldots, x_{j_2-1}, x_{j_2+t},\ldots,x_{j_{s-1}-1}, x_{j_{s-1}+t}, x_{j_{s-1}+t+1}, \ldots,x_{i_s})
\end{equation}
    with $2\leq s\leq d-1$, $j_l\leq i_l$ for $1\leq l\leq s-1$ and $j_l-j_{l-1}\geq t$ for $2\leq l\leq s-1$.
\end{Theorem}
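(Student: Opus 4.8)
The strategy is to convert the statement into a combinatorial classification of minimal vertex covers and then read the three forms off that classification. Since $B_t(u)$ is a squarefree monomial ideal it is radical, hence has no embedded primes and $\Ass(B_t(u))=\Min(B_t(u))$; so I only need to describe the minimal primes. By the standard correspondence for squarefree monomial ideals (see \cite[Section~1.3]{HHBook}), these are exactly the ideals $P_C=(x_i : i\in C)$ with $C\subseteq[n]$ a minimal vertex cover of the clutter $\{\supp(w): w\in G(B_t(u))\}$. Recalling the generators, a set meets $\supp(w)$ for all $w$ iff it meets every \emph{feasible} sequence, by which I mean an increasing $t$-spread sequence $k_1<\cdots<k_d$ with $k_l\le i_l$ for all $l$ (here I use $i_d=n$). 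Thus it is equivalent to classify, up to complementation, the maximal subsets $D=[n]\setminus C$ containing no feasible sequence, and to match them against \eqref{form1}, \eqref{form2}, \eqref{form3}.

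I would first check that each listed family is a minimal cover, which is the easier half. The covering property is a pigeonhole count: a gap interval $[j_l,j_l+t-1]$ has length $t$, so two of its elements differ by at most $t-1$ and cannot both occur in a $t$-spread sequence. For \eqref{form1}, $D=\bigcup_{l=1}^{d-1}[j_l,j_l+t-1]$ is a union of $d-1$ such intervals and therefore holds at most $d-1<d$ terms of any $t$-spread sequence; for \eqref{form2} and \eqref{form3}, every feasible sequence has its first $s$ terms $k_1<\cdots<k_s\le i_s$, which are forced into the $s-1$ lower gaps and again cannot all fit. Minimality means each $x_c$ with $c\in C$ is essential, i.e.\ $D\cup\{c\}$ does contain a feasible sequence through $c$; this is produced by a staircase placing one term in each gap and the term $c$ on its plateau, shifting the chosen terms within their gaps to preserve the $t$-spread condition. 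The verification is routine but uses the constraints $j_l\le i_l$ and $j_l-j_{l-1}\ge t$ in an essential way, so I would record exactly where each is needed.

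The substance is the converse: every minimal cover $C$ yields a $D=[n]\setminus C$ of one of these shapes. Here I would run the greedy feasibility test on $D$: set $a_1=\min(D\cap[1,i_1])$ and $a_l=\min(D\cap[a_{l-1}+t,\,i_l])$, and observe by an exchange argument (one shows $a_l\le b_l$ for any feasible $b_1<\cdots<b_d$) that greedy reaches length $d$ exactly when some feasible sequence exists. Hence $D$ is independent precisely when the process breaks off at a first step $s\le d$. Maximality of $D$ then forces rigidity: each maximal run of $D$ lying below the break must have length exactly $t$ (a shorter run could be enlarged, a misaligned run shifted), the run-starts $j_l$ must satisfy $j_l\le i_l$ and $j_l-j_{l-1}\ge t$, and beyond the break either no room remains, yielding \eqref{form1}, or $D$ must swallow the whole tail $[i_s+1,n]$, yielding \eqref{form2} and \eqref{form3}.

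This rigidity step is the main obstacle: one must show that maximality pins the lower run lengths to be exactly $t$ and that the feasibility bounds $k_l\le i_l$ transcribe precisely into the stated inequalities on the $j_l$. I expect an induction on $d$ (removing the lowest gap and passing to the $t$-spread principal Borel ideal $B_t(x_{i_2}\cdots x_{i_d})$) to keep the bookkeeping manageable. Alternatively, since the minimal vertex covers of $B_t(u)$ are already determined inside the proof of \cite[Theorem~1.1]{AEL}, one may simply transcribe that computation and invoke $\Ass=\Min$; the argument above is the self-contained reformulation of that fact.
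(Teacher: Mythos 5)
You should first be aware that the paper contains no proof of Theorem~\ref{ass} at all: it states that the minimal primes ``were determined implicitly in the proof of \cite[Theorem~1.1]{AEL}'' and presents the theorem as a reformulation of that result. So your closing alternative (transcribe the computation from \cite{AEL} and invoke $\Ass=\Min$) is precisely the paper's route, while your main argument is a genuinely self-contained substitute. Its outer layers are correct: since $t\geq 1$, $B_t(u)$ is squarefree, hence radical, hence $\Ass(B_t(u))=\Min(B_t(u))$; the minimal primes correspond to minimal vertex covers of the clutter of feasible sequences $k_1<\dots<k_d$ with $k_l\leq i_l$ and $k_l-k_{l-1}\geq t$; the pigeonhole covering arguments for the three forms are right; and the greedy rule $a_l=\min(D\cap[a_{l-1}+t,i_l])$ together with the exchange inequality $a_l\leq b_l$ correctly detects whether $D=[n]\setminus C$ contains a feasible sequence. (Note also that the theorem as stated asserts only one inclusion, so strictly only your ``converse'' half is needed.)

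The gap is exactly where you flag it, and the one concrete structural claim you make there is false. You assert that maximality forces ``each maximal run of $D$ lying below the break'' to have length exactly $t$. Take $t=2$, $u=x_2x_4x_8$, $n=8$: the set $D=\{1,2,3,4\}$ is a maximal independent set --- any feasible triple in $D$ would need $k_3\geq k_1+4\geq 5$, while adjoining any $m\in\{5,\dots,8\}$ creates the feasible triple $(1,3,m)$ --- and its complement gives the minimal prime $(x_5,x_6,x_7,x_8)$, which is \eqref{form1} with $j_1=1$, $j_2=3$. Here the unique maximal run of $D$ has length $4=2t$. The correct rigidity statement is that $D$ below the break is a disjoint union of $s-1$ blocks of length exactly $t$ whose left endpoints satisfy $j_l\leq i_l$ and $j_{l+1}-j_l\geq t$, where adjacency $j_{l+1}=j_l+t$ is always permitted (because $j_l\leq i_l\leq i_{l+1}-t$); consequently maximal runs can have length any multiple of $t$, and an argument that tries to ``enlarge a shorter run'' or ``shift a misaligned run'' until every run has length $t$ cannot close. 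The block decomposition has to be extracted differently (e.g.\ by bounding $|D\cap[1,m]|$ against the greedy picks, or by the induction on $d$ you mention), and until that is written out the decisive implication --- every minimal cover has one of the three shapes --- is not proved. As it stands, the only complete justification available in your proposal is the citation of \cite{AEL}, which is also all the paper offers.
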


\begin{Example}
Let $u=x_3x_7x_{10}\in \KK[x_1,\ldots,x_{10}]$ and $I=B_3(u)$. Then we have
\begin{align*}
 \Ass(I)=& \{P_1=(x_7,x_8,x_9,x_{10}), P_2=(x_4,x_8,x_9,x_{10}), P_3=(x_4,x_5,x_9,x_{10}),\\
        & P_4=(x_4,x_5,x_6,x_{10}), P_5=(x_1,x_8,x_9,x_{10}), P_6=(x_1,x_5,x_9,x_{10}), \\
        & P_7=(x_1,x_5,x_6,x_{10}), P_8=(x_1,x_2,x_9,x_{10}), P_9=(x_1,x_2,x_6,x_{10}), \\
        & P_{10}=(x_1,x_2,x_3),\\
        & P_{11}=(x_4,x_5,x_6,x_7),P_{12}=(x_1,x_5,x_6,x_7), P_{13}=(x_1,x_2,x_6,x_7)\}.
\end{align*}
\end{Example}

In the same paper, in \cite[Corollary 2.6]{AEL}, it was also proved that every $t$-spread principal Borel ideal satisfies the persistence property.

\begin{Remark}\label{rem}
Let $u=x_{i_1}\cdots x_{i_{d-1}}x_n$ be a $t$-spread monomial in $S$, where $t$ is a positive integer. We consider $I=B_t(u)$.  If $i_1<t$, then \[\bigcup_{v\in G(I)}\supp(v)=[n]\setminus\{i_1+1, i_1+2, \ldots, t\}\] and $I=B_t(u)$ is in fact an $i_1$-spread ideal in the polynomial ring \[\KK[x_j:j\notin\{i_1+1, i_1+2,\ldots, t\}].\] Therefore, in what follows, we may consider $I=B_t(u)$ to be a $t$-spread principal Borel ideal with $u=x_{i_1}\cdots x_{i_{d-1}}x_n$ and $i_1\geq t$.
\end{Remark}

Let $u=x_ix_n$ be a $t$ spread monomial in $S$, where $t$ is a positive integer. In \cite[Theorem 1.1]{LR}, it was proved that $\Ass^{\infty}(B_t(u))=\Min(B_t(u))\cup(x_1, x_2,\ldots, x_n)$ if $i\geq t+1$ and $B_t(u)$ is normally torsionfree if $i=t$. In the following proposition, we give another proof for \cite[Theorem 1.1]{LR} by using the main result of the first section, Theorem~\ref{graph}.

\begin{Proposition}\label{deg2}
Let $u=x_ix_n$ be a $t$-spread monomial in $S$ with $i\geq t$. Then $I=B_t(u)$ is almost normally torsionfree. Moreover, $I$ is normally torsionfree if and only if $i=t$.
\end{Proposition}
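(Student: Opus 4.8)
The plan is to recognize $I=B_t(x_ix_n)$ as the edge ideal of an explicit graph and then to invoke Theorem~\ref{graph} together with the Simis--Vasconcelos--Villarreal criterion \cite[Theorem~5.9]{SVV}. From the description of $G(B_t(u))$ recalled above, a monomial $x_ax_b$ with $a<b$ lies in $G(I)$ precisely when $a\le i$ and $b-a\ge t$; hence $I=I(G)$, where $G$ is the graph on $[n]$ with edge set $\{\{a,b\}:a<b,\ a\le i,\ b-a\ge t\}$. Writing $L=\{1,\dots,i\}$ and $R=\{i+1,\dots,n\}$, I would first record the two structural features that drive everything: $R$ is an independent set, since any edge forces its smaller endpoint to lie in $L$; and, because $t\le i\le n-t$, every vertex occurs in some edge, so $G$ is genuinely a graph on the full vertex set $[n]$.

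Next I would settle the second assertion. If $i=t$, then every edge joins $\{1,\dots,t\}$ to $\{t+1,\dots,n\}$, so $G$ is bipartite and $I(G)$ is normally torsionfree by \cite[Theorem~5.9]{SVV}. If $i\ge t+1$, then $n\ge i+t\ge 2t+1$, so the vertices $1,\,t+1,\,2t+1$ are pairwise adjacent; thus $G$ contains a triangle, is not bipartite, and $I(G)$ is not normally torsionfree. This proves the \emph{moreover} statement, and simultaneously identifies the invariant of Theorem~\ref{graph}: when $i\ge t+1$ the shortest odd cycle is this triangle, so $k=1$.

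It remains to prove that $I$ is almost normally torsionfree when $i\ge t+1$ (the case $i=t$ being immediate, since normally torsionfree ideals are almost normally torsionfree). For this I would verify the two hypotheses of Theorem~\ref{graph}. Connectedness is routine: vertex $1$ is adjacent to all of $\{t+1,\dots,n\}$, and every other vertex is joined to $1$ by a path of length at most two. The covering condition $V(C)\cup N(C)=[n]$ for every odd cycle $C$ is the heart of the matter and the step I expect to be the main obstacle. I would reduce it to two facts about an arbitrary odd cycle $C$, with $a=\min V(C)$ and $c=\max V(C)$. Fact (A) is that $c-a\ge 2t-1$: any interval of $2t-1$ consecutive integers induces a bipartite subgraph of $G$, with parts its lower and upper halves, because every edge has a gap of at least $t$; hence no odd cycle fits inside such an interval. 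Fact (B) is that $a\le i-t$: since $R$ is independent, an odd cycle must contain an $L$--$L$ edge $\{p,q\}$ with $p<q\le i$ and $q-p\ge t$, whence $a\le p\le i-t$.

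Granting these, the conclusion follows cleanly. Because $a\le i$, we have $N(a)\supseteq\{a+t,\dots,n\}$. For any vertex $v$ with $v<a+t$ and $v\ne a$, Fact (B) gives $v\le a+t-1\le i-1$, so $v\in L$; and Fact (A) gives $c\ge a+2t-1\ge v+t$, so $\{v,c\}$ is an edge and $v\in N(c)$. Therefore
\[
V(C)\cup N(C)\supseteq\{a\}\cup\{a+t,\dots,n\}\cup\bigl(\{1,\dots,a+t-1\}\setminus\{a\}\bigr)=[n].
\]
With connectedness and the covering condition established, Theorem~\ref{graph} (with $k=1$) yields $\Ass(I^m)=\Min(I)$ for $m\le 1$ and $\Ass(I^m)=\Min(I)\cup\{\mm\}$ for $m\ge 2$, so $I$ is almost normally torsionfree. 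This recovers \cite[Theorem~1.1]{LR}. I expect the only delicate points to be the window argument for Fact (A) and the bookkeeping that no variables are missing from $I$ so that $G$ really lives on all of $[n]$; both are elementary once stated in the above form.
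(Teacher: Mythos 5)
Your proof is correct and follows essentially the same route as the paper: identify $B_t(x_ix_n)$ as the edge ideal of an explicit graph, dispose of $i=t$ via bipartiteness and \cite[Theorem~5.9]{SVV}, and for $i>t$ verify the covering condition $V(C)\cup N(C)=[n]$ of Theorem~\ref{graph} for every odd cycle $C$. The only difference lies in how that condition is checked: the paper analyses consecutive vertex triples along the cycle, whereas you argue from the extreme labels $a=\min V(C)$ and $c=\max V(C)$ via the two facts $c-a\ge 2t-1$ and $a\le i-t$ --- somewhat cleaner bookkeeping, but the same underlying idea.
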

\begin{proof}

  If $i=t$, then $I$ is the edge ideal of a bipartite graph on the vertex set \[\{1,2,\ldots, t\}\cup\{t+1,\ldots, n\}.\] Thus, $I$ is normally torsionfree.

  If $i>t$, then $I$ is the edge ideal of a connected graph $G$ which contains an odd cycle. Let \[C=(\{i_0,i_1\},\{i_1, i_2\},\ldots, \{i_{r-1}, i_0\})\] be an odd cycle of $G$. We consider $i_r=i_0$ and $i_{-1}=i_{r-1}$.

  In the case that there exists $k\in \{0,1,\ldots, r-1\}$ such that
  \begin{equation}\label{tip}
    t+1\leq i_{k-1}\leq i\text{, }i_{k-1}+t\leq i_k\leq i\text{ and }i_k+t\leq i_{k+1},
  \end{equation}
  $I$ is almost normally torsionfree and $\Ass^{\infty}(I)=\Min(I)\cup \{(x_1,\ldots, x_n)\}$.
  Indeed, we have \[1,2,\ldots, t,t+1, \ldots, i_{k-1}\in V(C)\cup N(C)\] because $i_k\geq i_{k-1}+t>2t$.
  Since $i_{k+1}\geq i_k+t\geq i_{k-1}+2t$, we also have \[i_{k-1}+1, i_{k-1}+2, \ldots, i_{k-1}+t-1,i_{k-1}+t, i_{k-1}+t+1, \ldots, n\in V(C)\cup N(C).\]
  Therefore, $V(C)\cup N(C)=[n]$ and by Theorem~\ref{graph}, the proof of this case is completed.

  In the case that for any $k\in\{0,1,\ldots, r-1\}$, the inequalities (\ref{tip}) are not fulfilled, there exists $\ell \in \{0,1,\ldots, r-1\}$ such that $1\leq i_{\ell}\leq t$. Since $C$ is an odd cycle, we may choose $\ell$ such that \[t<i_{\ell+1}\leq i\text{ and }i_{\ell+2}\geq i_{\ell+1}+t.\]
  In this case, \[1,2,\ldots,t, t+1,\ldots, i_{\ell+1}\in V(C)\cup N(C)\] because $i_{\ell+2}\geq i_{\ell+1}+t>2t$.
  We also obtain \[i_{\ell+1}+t, i_{\ell+1}+t+1,\ldots, n\in V(C)\cup N(C).\]
  Since $i_{\ell+1}\geq t+i_{\ell}$, \[i_{\ell+1}+1, i_{\ell+1}+2,\ldots, i_{\ell+1}+t-1\in V(C)\cup N(C).\] Thus, $V(C)\cup N(C)=[n]$ and by Theorem~\ref{graph}, $I$ is again almost normally torsionfree with $\Ass^{\infty}(I)=\Min(I)\cup \{(x_1,\ldots, x_n)\}$.
\end{proof}

\begin{Theorem}\label{tspread}
  Let $t$ be a positive integer and $I=B_t(u)$, where $u=x_{i_1}x_{i_2}x_n$ is a $t$-spread monomial. Then the following condition are equivalents:
  \begin{enumerate}
    \item $B_t(u)$ is normally torsionfree.
    \item $B_t(u)$ is almost normally torsionfree.
    \item $u=x_tx_{2t}x_n$.
  \end{enumerate}
\end{Theorem}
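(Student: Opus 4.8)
The plan is to establish the cycle $(1)\Rightarrow(2)\Rightarrow(3)\Rightarrow(1)$, of which $(1)\Rightarrow(2)$ is immediate since every normally torsionfree ideal is almost normally torsionfree. By Remark~\ref{rem} I may assume $i_1\ge t$. I would first reduce almost normal torsionfreeness to a counting statement: since $\Ass(I)=\Min(I)$ by Theorem~\ref{ass} and $I=B_t(u)$ has the persistence property by \cite[Corollary 2.6]{AEL}, the chain $\Ass(I)\subseteq\Ass(I^2)\subseteq\cdots$ is increasing, so $I$ is almost normally torsionfree if and only if at most one non-minimal prime occurs in $\bigcup_{k\ge1}\Ass(I^k)$. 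Hence (2) is equivalent to the assertion that $\bigcup_{k\ge1}\Ass(I^k)\setminus\Min(I)$ has at most one element, and its negation must exhibit two distinct non-minimal primes.

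For $(3)\Rightarrow(1)$, let $\Delta$ be the simplicial complex with $I(\Delta)=B_t(x_tx_{2t}x_n)$. A direct check on the generators shows that each facet $\{j_1,j_2,j_3\}$ has $j_1\in[1,t]$, $j_2\in[t+1,2t]$ and $j_3\in[2t+1,n]$; equivalently, after the substitution $x=j_1$, $y=j_2-t$, $z=j_3-2t$, the facets are exactly the multichains $1\le x\le y\le z$ with $x,y\le t$ and $z\le n-2t$. Thus $\Delta$ is transversal with respect to the three blocks $[1,t]$, $[t+1,2t]$, $[2t+1,n]$. I would then show $\Ass(I^k)=\Min(I)$ for every $k$ by treating a candidate associated prime $P$ in two cases. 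If $P\ne\mm$, localizing at $P$ inverts some variable and lowers the degree of every generator meeting it; I would show $I_P$ is normally torsionfree by induction on $n$, using Lemma~\ref{split} to handle the case where the localized complex disconnects and Proposition~\ref{deg2} for the degree-$2$ pieces that appear, and conclude $P_P\notin\Min(I_P)$, whence $P\notin\Ass(I^k)$. If $P=\mm$, localization is useless and I would argue directly that no monomial $w\notin I^k$ satisfies $I^k:w=\mm$: any such $w$ arising from a special cycle can be re-covered by a different family of transversal facets with the same multidegree, forcing $w\in I^k$, a contradiction.

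For the contrapositive of $(2)\Rightarrow(3)$, assume $i_1\ge t$ and $u\ne x_tx_{2t}x_n$, so that $i_1>t$, or $i_1=t$ and $i_2>2t$. In both cases the three blocks above overlap: a vertex may occur as the middle index of one generator and as an outer index of another. Exploiting this overlap I would, on the one hand, build a special odd $3$-cycle whose colon witness is \emph{not} absorbed into the higher power (unlike in case $(3)$), and use it as in Step~1 of Theorem~\ref{specialcycle} to show $\mm\in\Ass(I^k)$ for some $k$. On the other hand, by localizing at a suitable prime $P'$, the ideal $I_{P'}$ acquires a degree-$2$ $t$-spread principal Borel component whose generator lies strictly above the diagonal; Proposition~\ref{deg2} then makes the maximal ideal of the surviving proper support an associated prime of some power, which pulls back to a non-minimal prime $P\ne\mm$ of $I$. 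Thus we obtain two distinct non-minimal primes ($\mm$ and $P$), so $I$ is not almost normally torsionfree.

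The hardest step will be $(3)\Rightarrow(1)$, and within it the case $P=\mm$. Precisely because the transversal complex $\Delta$ does contain special odd cycles once $n>3t$, one cannot conclude normal torsionfreeness from the absence of special cycles in \cite[Theorem~10.3.16]{HHBook}; the substance is to prove that every such cycle is non-surviving, i.e.\ that its colon witness is re-absorbed into the power by re-matching transversals. Carrying out this re-matching uniformly in $k$ --- equivalently, verifying that the transversal clutter has the max-flow--min-cut property so that $I^{(k)}=I^k$ for all $k$ --- is the technical core; by comparison the two colon computations needed for $(2)\Rightarrow(3)$ are routine.
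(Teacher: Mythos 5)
Your skeleton $(1)\Rightarrow(2)\Rightarrow(3)\Rightarrow(1)$ and the reduction of (2) to ``at most one embedded prime occurs in $\bigcup_{k}\Ass(I^k)$'' are sound, but your plan for $(2)\Rightarrow(3)$ contains a genuine error. You propose to refute almost normal torsionfreeness by producing two embedded primes, one of which is $\mm$, obtained from a special odd $3$-cycle as in Step~1 of Theorem~\ref{specialcycle}. That step, however, only yields the prime generated by the variables \emph{of the cycle}, not $\mm$, and in fact $\mm$ need not be associated to any power of $I$. Concretely, take $t=1$ and $u=x_1x_3x_4$ (so $i_1=t$, $i_2=2t+1$, and $u\neq x_tx_{2t}x_n$): then $I=x_1(x_2x_3,x_2x_4,x_3x_4)$, and since $x_1$ divides every generator, writing $w=x_1^aw'$ with $x_1\nmid w'$ shows that $x_1\in I^k:w$ together with $w\notin I^k$ forces $a=k-1$ and $w'\in(x_2x_3,x_2x_4,x_3x_4)^k$, whence $x_2w\notin I^k$; so $I^k:w\neq \mm$ for every $k$ and every $w$. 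With $\mm$ unavailable you are left with a single embedded prime, which does not contradict (2). The paper proceeds differently: for $i_2\geq 2t+1$ it exhibits the two non-maximal primes $P_1=(x_{t+1},\ldots,x_n)$ and $P_2=(x_1,\ldots,x_{t-1},x_{2t},\ldots,x_n)$ in $\Ass(I^2)\setminus\Min(I)$, by localizing to the degree-two ideal $B_t(x_{i_2}x_n)$ and invoking the depth computation of \cite[Theorem 3.1]{AEL}. (Note that producing two \emph{distinct} embedded primes is genuinely the crux here; observe that $P_1=P_2$ when $t=1$, which is exactly the boundary case my example above probes.)

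For $(3)\Rightarrow(1)$ you correctly identify the block structure --- every generator of $B_t(x_tx_{2t}x_n)$ has exactly one index in each of $[1,t]$, $[t+1,2t]$, $[2t+1,n]$ --- but you leave the decisive step (the case $P=\mm$, i.e.\ the ``re-matching''/max-flow--min-cut verification) entirely unexecuted, and a naive re-matching cannot work: not every transversal of the three blocks is a generator (for $t=2$, the monomial $x_2x_3x_5$ meets all three blocks but lies outside $B_2(x_2x_4x_n)$), so the multichain condition must be preserved throughout, which is precisely where sortability enters. The paper's argument is shorter and needs no case split on $P$: if $P=I^k:w$ is embedded, localization at $(x_2,\ldots,x_n)$ and at $(x_1,\ldots,x_{n-1})$ reduces to the normally torsionfree degree-two ideals $B_t(x_{2t}x_n)$ and $B_t(x_tx_{2t})$, forcing $x_1,x_n\in P$; then sortability of $G(I)$ (\cite[Proposition 3.1]{EHQ}) lets one write $x_1w$ as a sorted product of $k$ generators with $x_1$ dividing the first, and counting the variables of index at most $t$ in $x_nw$ shows $x_nw\notin I^k$, a contradiction. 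To complete your proof you would need either to carry out the max-flow--min-cut verification in full or to switch to this colon-and-counting argument.
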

\begin{proof}
  $(1)\Rightarrow (2)$ is trivial.

  $(2)\Rightarrow (3)$: By Remark~\ref{rem}, we consider $I=B_t(u)$ to be a $t$-spread principal Borel ideal with $u=x_{i_1}x_{i_2}x_n$ and $i_1\geq t$.

  If $i_2\geq 2t+1$, then \[P_1=(x_{t+1}, x_{t+2},\ldots, x_n)\text{ and }P_2=(x_1,x_2,\ldots, x_{t-1}, x_{2t}, x_{2t+1}, \ldots, x_n)\] belong to $\Ass(I^2)\setminus \Ass(I)$. Indeed, we have \[I_{P_1}=B_t(x_{i_2}x_n)\subset S_{P_1}\text{ and }I_{P_2}=B_t(x_{i_2}x_n)\subset S_{P_2},\]
  where $I_{P_j}$ is the localization of $I$ with respect to $P_j$ for $j\in\{1,2\}$.
  Since $i_2\geq 2t+1$, we obtain $\depth(S_{P_1}/I_{P_1}^k)=0$ and $\depth(S_{P_2}/I_{P_2}^k)=0$ for every $k\geq 2$, by \cite[Theorem 3.1]{AEL}. It implies that $P_1, P_2\in \Ass(I^2)$. According to Theorem~\ref{ass}, $P_1, P_2\notin\Ass(I)=\Min(I)$.

  Since $I$ is almost normally torsionfree, we must have $i_2\leq 2t$. Thus, we obtain $i_1\geq t$, $i_2\leq 2t$ and $i_2-i_1\geq t$. In other words, $i_1=t$ and $i_2=2t$.

  $(3)\Rightarrow (1)$: We consider $u=x_tx_{2t}x_n$ with $n\geq 3t$. Suppose that $B_t(u)$ is not normally torsionfree. Then there exist $k\geq 2$ and $P\in \Ass(I^k)\setminus \Ass(I)$. Let $w$ be a monomial in $S$ with the property that $w\notin I^k$ and $P=I^k:w$.

  If $x_1\notin P$, then $P\in \Ass(I_{(x_2,\ldots, x_n)}^k)$. Since \[I_{(x_2,\ldots, x_n)}=B_t(x_{2t}x_n)\subset\KK[x_{t+1}, x_{t+2},\ldots, x_n]\] is normally torsion free, $P\in\Ass(I_{(x_2,\ldots, x_n)})$. Hence, we have $P\in\Ass(I)$ which is false. So, $x_1\in P$. It implies that $x_1\cdot w=u_1u_2\cdots u_k$, where $u_j\in G(I)$ for every $1\leq j\leq k$. By \cite[Proposition 3.1]{EHQ}, $G(I)$ is sortable. Therefore, we may consider the $k$-tuple $(u_1,\ldots, u_k)$ to be sorted. It implies that $x_1\mid u_1$ and $w=(u_1/x_1)u_2\cdots u_k$.

  We also have $x_n\in P$. Otherwise, $P\in \Ass(I_{(x_1,\ldots, x_{n-1})}^k)$. Since \[I_{(x_1,\ldots, x_{n-1})}=B_t(x_{t}x_{2t})\subset\KK[x_{1}, x_{2},\ldots, x_{2t}]\] is normally torsionfree, $P\in\Ass(I_{(x_1,\ldots, x_{n-1})})$. Thus, we obtain $P\in\Ass(I)$ which is false.

  Because $x_n\in P$, \[x_n\cdot w=(x_nu_1/x_1)u_2\cdots u_k\in I^k.\] Notice that a monomial $v\in I^k$ if and only if the monomial $v$ contains at least $k$ variables with indices less than $t+1$, $k$ variables with indices in the interval $[t+1,2t]$ and $k$ variables with indices greater than $2t$. In the product $x_n\cdot w$, we have only $k-1$ variables with indices less than $t+1$. It implies that $x_n\cdot w\notin I^k$ which is a contradiction.
\end{proof}

\end{document}